\documentclass[preprint]{imsart}

\usepackage{natbib}
\usepackage{hyperref}
\usepackage{amsmath}
\usepackage{graphicx}
\usepackage{mathrsfs}
\usepackage{amsthm}
\usepackage{amscd}
\usepackage{amssymb}
\usepackage{latexsym}
\usepackage{stmaryrd}
\usepackage{wasysym}
\usepackage{enumitem}
\usepackage{verbatim}
\usepackage{nicefrac}


\startlocaldefs
\numberwithin{equation}{section}
\theoremstyle{plain}
\endlocaldefs

\newcommand{\hyperboloid}[1]{\mathbb{H}_{#1}}

\newcommand{\iid}{iid} 
\newcommand{\isometries}{\mathop{isom}}

\newcommand{\Prob}[1]{\mathop{Pr}\left(#1\right)}

\newcommand{\graph}{\mathrm{graph}}

\newcommand{\Expect}[1]{\mathbb{E}\left[ #1 \right] }
\newcommand{\Loglike}{\ell}
\newcommand{\NormalizedLoglike}{\ell_{norm}}
\newcommand{\linkfn}{w_n}
\newcommand{\linkfnseq}{w_{1:\infty}}
\newcommand{\CrossEntropy}{\overline{\Loglike}}
\newcommand{\NormalizedCrossEntropy}{\overline{\Loglike}_{norm}}
\newcommand{\LikeFnClass}{\mathcal{L}_n}
\newcommand{\convprob}{\ensuremath{\stackrel{P}{\rightarrow}}}
\newcommand{\eqdist}{\ensuremath{\stackrel{d}{=}}}

\newcommand{\truedyaddist}{\pi^*}
\newcommand{\logitbound}{v_n}
\newcommand{\MLE}{\hat{x}_{1:n}}
\newcommand{\trueparam}{x^*_{1:n}}
\DeclareMathOperator*{\argmax}{argmax} 
\DeclareMathOperator*{\logit}{logit}
\DeclareMathOperator*{\arccosh}{arccosh}
\newcommand{\IsometryClass}[1]{ \left[ #1 \right] }
\newcommand{\randomestimatedcoordinates}{\hat{X}_{1:n}}
\newcommand{\estimatedcoordinates}{\hat{x}_{1:n}}

\newcommand{\MaxDeviation}{\Gamma_n}
\newcommand{\KLDiv}{D}

\newcommand{\LatentSpace}{M}
\newcommand{\ContinuousSpace}{M}
\newcommand{\MetricSymbol}{\mathop{dist}}
\newcommand{\Metric}[2]{\MetricSymbol\left(#1,#2\right)}
\newcommand{\MetricSpace}{(\ContinuousSpace,\MetricSymbol)}

\newcommand{\QuasiUniformDensity}{q}


\newcommand{\HyperbolicGaussianParam}{\sigma}

\newcommand{\Reals}{\mathbb{R}} 

\newcommand{\ra}{\rightarrow}
\newcommand{\HH}{\mathbb{H}_2} 
\newcommand{\disteq}{\eqdist}  
\newcommand{\adjacencymatrix}{G}


\newcommand{\smooth}{smooth}
\newcommand{\logitbounded}{logit-bounded}
\newcommand{\rigid}{rigid}
\newcommand{\regular}{regular}

\theoremstyle{definition}
\newtheorem{thm}{Theorem}
\newtheorem{defn}[thm]{Definition}
\newtheorem{cor}[thm]{Corollary}
\newtheorem{lem}[thm]{Lemma}
\newtheorem{prop}[thm]{Proposition}

\begin{document}
\begin{frontmatter}
\title{Consistency of Maximum Likelihood for Continuous-Space Network Models I}
\runtitle{Consistency for Continuous Network Models I}

\begin{aug}
\author{Cosma Rohilla Shalizi}
\address{Department of Statistics and Machine Learning\\ Carnegie Mellon University\\ Pittsburgh, PA\\ 15213 USA\\
{\em and } Santa Fe Institute\\ 1399 Hyde Park Road\\ Santa Fe, NM\\ 87501, USA\\
\href{Url}{cshalizi@cmu.edu}
}

\and
\author{Dena Marie Asta}
\address{Department of Statistics\\ Ohio State University\\ Columbus, OH\\ 43210 USA\\
\href{Url}{dasta@stat.osu.edu}
}
\runauthor{Shalizi and Asta}
\end{aug}

\begin{abstract}
  A very popular class of models for networks posits that each node is
  represented by a point in a continuous latent space, and that the probability
  of an edge between nodes is a decreasing function of the distance between
  them in this latent space.  We study the {\em embedding problem} for these
  models, of recovering the latent positions from the observed graph.  Assuming
  certain natural symmetry and smoothness properties, we establish the uniform
  convergence of the log-likelihood of latent positions as the number of nodes
  grows.  A consequence is that the maximum likelihood embedding converges on
  the true positions in a certain information-theoretic sense.  Extensions of
  these results, to recovering distributions in the latent space, and so
  distributions over arbitrarily large graphs, will be treated in the sequel.
\end{abstract}

\begin{keyword}[class=MSC]
\kwd{62G05; 91D30}
\end{keyword}

\begin{keyword}
\kwd{Consistency}
\kwd{Network models}
\kwd{Latent space models}
\kwd{Graph embedding}
\kwd{MLE}
\end{keyword}


\end{frontmatter}

\section{Introduction}
The statistical analysis of network data, like other sorts of statistical
analysis, models the data we observe as the outcome of stochastic processes,
and rests on inferring aspects of those processes from their results.  It is
essential that the methods of inference be consistent, that as they get more
and more information, they should come closer and closer to the truth.  In this
paper, we address the consistency of non-parametric maximum likelihood
estimation for a popular class of network models, those based on continuous
latent spaces.

In these models, every node in the network corresponds to a point in a latent,
continuous metric space, and the probability of an edge or tie between two
nodes is a decreasing function of the distance between their points in the
latent space.  These models are popular because they are easily interpreted in
very plausible ways, and often provide good fits to data.  Moreover, they have
extremely convenient mathematical and statistical properties: they lead to
exchangeable, projectively-consistent distributions over graphs; the comparison
of two networks reduces to comparing two clouds of points in the latent space,
or even to comparing two densities therein; it is easy to simulate new networks
from the estimated model for purposes of bootstrapping, etc.  While the latent
space has typically been taken to be a low-dimensional Euclidean space
\citep{Hoff-Raftery-Handcock}, recent work has suggested that in many
applications it would be better to take the space to non-Euclidean,
specifically negatively curved or hyperbolic
\citep{Krioukov-et-al-hyperbolic-geometry,
Asta-CRS-geometric-network-comparison}.

We can estimate continuous latent space models in the sense of an {\em
embedding}: given an observed graph, we wish to work backwards the locations of
the nodes in the latent space, i.e., to ``embed'' the graph in the latent
space.  The most straightforward method of embedding is a \textit{maximum
likelihood estimator} (MLE), treating the latent position of each node as a
parameter (or vector of parameters).  While it is straightforward to {\em say}
that a good embedding should converge on the true coordinates as the number of
nodes $n\rightarrow\infty$, making this mathematically precise is somewhat
tricky.  (We would, for example, need to define a metric on the space of
embeddings of graphs of different sizes.)  Instead, we prove the next best
thing: that for continuous latent space models of sufficient symmetry and
tameness, the distribution over graphs implied by the MLE converges, in
normalized Kullback-Leibler (KL) divergence, to the distribution implied by the
true embedding.  That is, the MLE becomes statistically indistinguishable, in
its observable consequences, from the truth.  This is a consequence of a result
we establish along the way, about the uniform convergence of normalized
log-likelihoods to their expectation values (Theorem
\ref{thm:uniform-concentration-of-likelihood}); the rate of this uniform
convergence upper-bounds the rate at which the MLE approaches the true
embedding in KL divergence.

In the sequel in preparation, we combine our results about normalized
log-likelihood with the construction of a specific class of metrics on growing
sequences of embeddings, to establish a more conventional, coordinate-wise
notion of consistency, and consistency for a subsequent estimator of the node
density in the latent space.

Section \S \ref{sec:background} reviews background on continuous latent space
models of networks.  Section \S \ref{sec:geo-inference} states our main
results, along with certain technical assumptions.  All proofs, and a number of
subsidiary results and lemmas, are deferred to Section \S\ref{sec:proofs}.  In
the Appendix, we note that our results generalize to mis-specified models.

\section{Background}
\label{sec:background}
In many, though not all, network data-analysis situations, we have only one
network --- perhaps not even all of that one network --- from which we
nonetheless want to draw inferences about the whole data-generating process.
This clearly will require a law of large numbers or ergodic theorem to ensure
that a single large sample is representative of the whole process.  The
network, however, is a single high-dimensional object whose every part is
dependent on every other part.  This is also true of time-series and spatial
data, but there we can often use the fact that distant parts of the data should
be nearly independent of each other.  While general networks often exhibit such
decay, networks in nature often lack a natural, exogenous sense of distance (in
the technical, geometric sense) that explains such decay.

{\bf Continuous latent space} (CLS) models are precisely generative models for
networks which exhibit just such an exogenous sense of distance.  Each node is
represented as a location in a continuous metric space, the {\bf latent space}.
Conditional on the vector of all node locations, the probability of an edge
between two nodes is a decreasing function of the distance between their
locations, and all edges are independent.  Generative models for networks for
which the existence of different edges is conditionally independent with
respect to some latent quantity $\mu$ are common; however CLS models, at least
as taken in this paper, are distinguished by the particular geometric form that
$\mu$ takes.

As mentioned above, the best-known CLS model for social networks is that of
\citet{Hoff-Raftery-Handcock}, where the metric space is taken to be Euclidean,
and node locations are assumed to be drawn \iid{}ly from a Gaussian
distribution.  In random geometric graphs
\citep{Penrose-random-geometric-graphs}, the locations are drawn \iid{}ly from
a distribution on a metric space possibly more general than Euclidean space and
the probabilities of connecting edges are either $0$ or $1$ based on a
threshold.

As also mentioned above, there is more recent work which indicates that for
some applications it would be better to let the latent space be negatively
curved, i.e. hyperbolic \citep{Albert-et-al-negative-curvature-of-networks,
  Kennedy-et-al-hyperbolicity-of-networks, Krioukov-et-al-hyperbolic-geometry}.
Mathematically, this is because many real networks can be naturally embedded
into such spaces.  More substantively, many real-world networks show highly
skewed degree distributions, very short path lengths, a division into a core
and peripheries where short paths between peripheral nodes ``bend back''
towards the core, and a hierarchical organization of clustering.  Thus if the
latent space is chosen to be a certain hyperboloid, one naturally obtains
graphs exhibiting all these properties
\citep{Krioukov-et-al-hyperbolic-geometry}.

The CLS models we have mentioned so far have presumed that node locations
follow tractable, parametric families in the latent space.  This is
mathematically inessential --- many of the results carry over perfectly well to
arbitrary densities --- and scientifically unmotivated.  Because CLS models may
need very different spaces depending on applications, we investigate
consistency of nonparametric estimation for them at a level of generality which
abstracts away from many of the details of particular spaces and their metrics.

To the best of our knowledge, there are no results in the existing literature
on the consistency of embedding for CLS models where edge probabilties vary
continuously with distance\footnote{Computationally-tractable and consistent
  embedding algorithms exist for some kinds of random geometric graph where
  edges are deterministically present between sufficiently-close nodes and
  otherwise deterministically absent
  \citep{Dani-et-al-recontruction-of-random-geometric-graphs}, but they rely
  crucially on deterministic links, and their statistical efficiency is
  unknown.  Uniform consistency for variants of these sorts of CLS models, such
  as \textit{random dot product graphs} (RDPG) \cite{young2007random}, have
  been well established (c.f. \cite{athreya2017statistical}); the inherently
  linear algebraic methods used to develop estimators and consistency results
  in the RDPG setting do not seem portable in the metric setting.}.

\section{Geometric Network Inference}
\label{sec:geo-inference}
Our goal is to show that when the continuous latent space model is sufficiently
smooth, and the geometry of the latent space is itself sufficiently symmetric,
then the maximum-likelihood embedding of a graph converges, in normalized
Kullback-Leibler divergence, to the true locations of the nodes (Theorem
\ref{thm:uniform-concentration-of-likelihood}).  As an intermediate step, we
show the uniform convergence of normalized log-likelihoods on their expectation
values, at an explicit rate, which also gives us the rate of KL convergence of
the MLE on the truth.  All proofs are postponed to \S \ref{sec:proofs}.

\subsection{Setting and Conventions}

We consider only simple, undirected, unlabeled graphs; we will write a random
graph as $G$, and will sometimes abuse notation to also write $G$ for the
adjacency matrix, so that $G_{pq}=G_{qp}=1$ if there is an edge between nodes
$p$ and $q$, and $=0$ otherwise.

All random graphs $G$ in this paper have conditionally independent edges; that
is, we assume for each $G$, there exists a random quantity $\mu$ such that
$G\mid\mu$ has independently distributed edges.  A \textit{continuous latent
  space model} assumes that $\mu$ has a certain geometric nature, which will be
defined in the succeeding paragraphs.

All the metrics of metric spaces will be denoted by $\MetricSymbol$; context
will make clear which metric $\MetricSymbol$ is describing.  Our model for
generating random graphs begins with a metric measure space $\ContinuousSpace$,
a metric space equipped with a Borel measure, and the corresponding group
$\isometries(\ContinuousSpace)$ of measure-preserving isometries
$\ContinuousSpace\cong\ContinuousSpace$.  Every node is located at
(equivalently, ``represented by'' or ``labeled with'') a point in
$\ContinuousSpace$, $x_i$ for the $i^{\mathrm{th}}$ node; the location of the
first $n$ nodes is $x_{1:n} \in \ContinuousSpace^n$, and a countable sequence
of locations will be $x_{1:\infty}$.  For each $n$, there is a non-increasing
{\bf link function} $w_n: [0,\infty) \mapsto [0,1]$, and nodes $i$ and $j$ are
joined by an edge with probability $w_n(\Metric{x_i}{x_j})$.  By a
\textit{latent space} $(\ContinuousSpace,\linkfnseq)$, we will mean the
combination of $\ContinuousSpace$ and a sequence $\linkfnseq$ of link functions
$w_1,w_2,\ldots$.  When the latent space is understood, we write
$\graph_n(x_{1:n})$ for the distribution of a random graph on $n$ vertices
located at $x_{1:n}$.  Thus in the particular case $G=\graph_n(x_{1:n})$,
we have $\mu=x_{1:n}$.

It is clear that for any $\phi \in \isometries(\ContinuousSpace)$, we have
for every $n$,
\begin{equation}
\graph_n(x_{1:n})\eqdist\graph_n(\phi(x_{1:n}))
\end{equation}
Accordingly, we will use $\IsometryClass{x_{1:n}}$ to indicate the equivalence
class of $n$-tuples in $\ContinuousSpace^n$ carried by isometries to $x_{1:n}$;
the metric on $\ContinuousSpace$ extends to these isometry classes in the
natural way,
\begin{equation}
  \Metric{\IsometryClass{x_{1:n}}}{\IsometryClass{y_{1:n}}}=\inf_{\phi\in\isometries(\ContinuousSpace)}{\sum_{i=1}^n\Metric{x_i}{\phi(y_i)}} ~. \label{eqn:metric-for-isometry-classes}
\end{equation}
We cannot hope to find $x_{1:n}$ by observing the graph it leads to, but we
can hope to identify $\IsometryClass{x_{1:n}}$.

\paragraph{Conventions} When $n$ and $m$ are integers, $n<m$, $n:m$ will be the
set $\{n, n+1, \ldots m-1, m \}$.  Unless otherwise specified, all limits will
be taken as $n\rightarrow\infty$.  All probabilities and expectations will be
taken with respect to the actual generating distribution of $G$.

\subsection{Axioms on the generative model}

We recall that a metric space $\ContinuousSpace$ is \textit{$k$-homogeneous} if
every isometry between finite submetric spaces each of size $k$ of
$\ContinuousSpace$ extends to an isometry on $\ContinuousSpace$, an isometry
$\ContinuousSpace\ra\ContinuousSpace$.  There we call a metric space
$\ContinuousSpace$ \textit{$\infty$-homogeneous} if every isometry between
finite submetric spaces of $\ContinuousSpace$ extends to an isometry on
$\ContinuousSpace$.  The literature takes \textit{homogeneous} to usually mean
$1$-homogeneous but to sometimes to mean $\infty$-homogeneous.  Motivating
examples are Euclidean space $\Reals^d$ and the \textit{Poincar\'{e} Halfplane}
$\HH$, described in \S\ref{subsec:example}.  Almost any example of a metric
space with a single ``singularity'' $x_1$, such as a ``figure 8,'' is not
$1$-homogeneous; for a close enough point $x_2$, there are also points
$x_3,x_4$ such that $\MetricSymbol(x_1,x_2)=\MetricSymbol(x_3,x_4)$, but
intuitively there cannot be any isometry carrying a singularity to a
non-singularity.  An example of a metric space that is $1$-homogeneous but not
$\infty$-homogeneous is the orientable surface of infinite genus.



%

Identifiability of graph distributions determined by certain CLS models is
possible.  We define such CLS models below.

\begin{defn}
  A latent space $(\ContinuousSpace, \linkfnseq)$ is {\bf \regular{}} when:
  \begin{enumerate}
    \item $\ContinuousSpace$ is a complete $\infty$-homogeneous Riemannian manifold;
	\item The group of isometries on $\ContinuousSpace$ has only finitely-many connected components;
    \item The function $\linkfn$ is injective and smooth for each $n$; and 
	\item The sequence $\linkfnseq$ satisifes $-\logitbound\leqslant\logit{\linkfn(\Metric{x}{y})}\leqslant\logitbound$ for some $\logitbound\in o(\sqrt{n})$. 
  \end{enumerate}
  \label{defn:regular}
\end{defn}

\begin{prop}
\label{prop:reasonable.symmetries}
The metric spaces $\Reals^d$ and $\HH$ satisfy points (1) and (2) of Definition \ref{defn:regular} with
\begin{equation*}
  B_{\HH}=B_{\Reals^d}=2.
\end{equation*}
where $B_{\ContinuousSpace}$ denotes the number of connected components of the group of isometries on a metric space $\ContinuousSpace$.  
\end{prop}

Demanding that $\logitbound = o(\sqrt{n})$ is done with an eye towards the
needs of the proofs in \S\ref{sec:proofs}.  Some common examples of link
functions (cf. \citep{Krioukov-et-al-hyperbolic-geometry}) include the
following two kinds:
\begin{equation}
	\label{eqn:example-link-functions}
  \linkfn(t)=\begin{cases}1&t\leqslant\ln n\\0&t>\ln n\end{cases}\quad
	\linkfn(t)=\frac{1}{1+e^{(\nicefrac{T_n^{-1}}{2})(t-\ln n)}}
\end{equation}

The first sort defines a graph where edges are deterministically present between sufficiently-close nodes, and deterministically absent between more distant nodes.
The second sort, in
which the sequence of $T_n$s are fixed \textit{temperature} parameters; the higher
the temperature $T_n$, the closer the link function is to a constant
probability $\nicefrac{1}{2}$.  The determinism of the first kind violates
logit-boundedness.  The second kind satisfies logit-boundedness when
$T_n\in o((\ln n)n^{-\nicefrac{1}{2}})$ and $t\leqslant\ln n$; in that
case
\[
\logit{\frac{1}{1+e^{{(\nicefrac{T_n^{-1}}{2})(t-\ln n)}}}}=-{(\nicefrac{T_n^{-1}}{2})(t-\ln n)}\in o(\sqrt{n})
\]

By extension, a CLS model is \regular{} when $(\ContinuousSpace, \linkfnseq)$
is.  The proof of the following proposition, a straightforward consequence of
$\infty$-homogeneity and injectivity of the link functions, is omitted.

\begin{prop}
  \label{thm:equivalent.random.graphs}
  For \regular{} CLS model
  \begin{equation}
    \graph_n(x_{1:n})\eqdist\graph_n(y_{1:n}) \iff \IsometryClass{x_{1:n}}=\IsometryClass{y_{1:n}}\quad n=1,2,\ldots
  \end{equation}
\end{prop}

Theorem \ref{thm:equivalent.random.graphs} lets us identify graph distributions
of the form $\graph_n(x_{1:n})$ with isometry classes
$\IsometryClass{x_{1:n}}$.  


\subsection{An example in the literature}\label{subsec:example}
Latent spaces of the form
\begin{equation*}
  (\HH,\linkfn).
\end{equation*}
where $\mathbb{H}_2=\{z\in\mathbb{C}\;|\;\mathop{Im}(z)>0\}$ is the \textit{Poincar\'{e} halfplane} with metric
$$dz=y^{-2}\;dx\;dy$$
were introduced \citep{Krioukov-et-al-hyperbolic-geometry} to model networks in
nature with tree-like characteristics (e.g. the internet).  With the first link
function defined in (\ref{eqn:example-link-functions}), regularity is violated
in multiple ways; the link functions are not logit-bounded as noted earlier,
but also the link functions are neither smooth nor injective.  With the second
set of $\linkfn$'s in (\ref{eqn:example-link-functions}), with temperature
parameters $T_n\in o(\sqrt{n}^{-1})$, the CLS model is \regular{}.  Such CLS
models have been shown to model salient for both large-scale and small-scale
properties of various sorts of social networks
\cite{smith2017geometry}, building on work of
\citet{Krioukov-et-al-hyperbolic-geometry}.  

\subsection{Estimation}

Given a latent space model $(\ContinuousSpace, \linkfnseq)$ and an $n$-node
graph $G$, the likelihood $\mathcal{L}(x_{1:n};G)$ of observing coordinates $x_{1:n}\in\ContinuousSpace^m$ is given as the product of edge probabilities\footnote{In this expression, every dyad appears twice, once as $p, q)$ and again as $(q,p)$, but, since $G$ is undirected, contributing the same factor to the likelihood each time.  This is thus the square of another possible likelihood which counted each dyad only once.  This will make no difference to the analysis, apart from needing to track a factor of 2 through our results.}:
\[
\mathcal{L}(x_{1:n};G) \equiv \prod_{p=1}^{n}{\prod_{q\neq p}{w_n(\MetricSymbol{x_p}{x_q})^{G_{pq}} (1-w_n(\MetricSymbol{x_p}{x_q}))^{1-G_{pq}}}}
\]

A maximum likelihood (ML) {\em embedding} of an $n$-node graph $G$ into
$\ContinuousSpace$ is
\begin{equation}
  \estimatedcoordinates=\argmax_{x_{1:n}\in\ContinuousSpace^n}{\mathcal{L}(x_{1:n};G)}
\end{equation}

Taking logs and dividing by the number of summands, we obtain the \textit{normalized log-likelihood} $\Loglike(x_{1:n};G)$ of observing coordinates $x_{1:n}\in\ContinuousSpace^m$ by:
\begin{equation}
  \label{eqn:likelihood.function}
	\Loglike(x_{1:n};G)= 
	\frac{1}{n(n-1)}\left( \sum_{(p,q)\in G}{\log{(\linkfn(\Metric{x_p}{x_q}))}}+\sum_{(p,q)\not\in G}{\log{(1-\linkfn(\Metric{x_p}{x_q}))}} \right)
\end{equation}
As usual, when there is no
ambiguity about the graph $G$ providing the data, we will suppress that as an
argument, writing $\Loglike(x_{1:n})$.

Taking expectations with respect to the actual graph
distribution of a random graph $G$ having $n$ nodes, we define the expected
normalized log-likelihood by
\begin{equation}
  \label{eqn:cross-entropy}
	\CrossEntropy(x_{1:n})=\mathbb{E}_G[\Loglike(x_{1:n}; G)],
\end{equation}
As we review in Sec.\ \ref{ref:information-theory}, well-known results from
information theory show that $-\CrossEntropy(x_{1:n})$ can always be decomposed
into the sum of two non-negative terms, $-\CrossEntropy(x_{1:n}) = H +
D(x_{1:n})$.  Here the first term, the ``source entropy rate'' $H$, captures
the inherent stochasticity of the data source.  The second term, the
``divergence rate'' $D(x_{1:n})$, measures the distance, or rather the
normalized Kullback-Leibler divergence, between that true distribution and
$\graph_n(x_{1:n})$.  Among other properties, $D(x_{1:n})$ controls the power
of any hypothesis test to distinguish $\graph_n(x_{1:n})$ from the true
distribution.  This divergence is minimized by $x_{1:n}=\trueparam$;
when the model is well-specified, $D(\trueparam)=0$.

We are now in a position to state our main results.  

\begin{thm}
  Suppose that the CLS model is \regular{}.  Then
  \begin{equation*}
    \MaxDeviation \equiv \sup_{x_{1:n}}{|\Loglike(x_{1:n})-\CrossEntropy(x_{1:n})|} \convprob 0
  \end{equation*}
  \label{thm:uniform-concentration-of-likelihood}
\end{thm}

\begin{cor}
  Suppose that the CLS model is \regular{}, and $G\sim\graph_n{\trueparam}$.  Then
  \begin{equation*}
   D(\MLE) - D(\trueparam) \leq 2\MaxDeviation \convprob 0
  \end{equation*}
  \label{cor:uniform-concentration-of-likelihood}
\end{cor}

\section{Proofs}
\label{sec:proofs}
This section furnishes proofs of main results about networks.  We can sketch the general
approach as follows.  We show that the expected log-likelihood achives its
maximum precisely at the true coordinates up to isometry (Lemma
\ref{lemma:crossent-minimized-on-truth}).  We then show that (in large graphs)
the log-likelihood $\Loglike(x_{1:n})$ is, with arbitrarily high probability,
arbitrarily close to its expectation value for each $x_{1:n}$ (Lemmas
\ref{lemma:individual-concentration-at-individual-rate} and
\ref{lem:individual-concentration-at-uniform-rate}).  We then extend that to a
uniform convergence in probability, over all of $\ContinuousSpace^n$ (Theorem
\ref{thm:uniform-concentration-of-likelihood}).  To do so, we need to bound the
richness (\textit{pseudo-dimension}
\cite[\S11]{Anthony-Bartlett-neural-network-learning}, a continuous
generalization of VC dimension) of the family of log-likelihood functions
(Theorem \ref{thm:pseudo-dimension-of-likelihood-functions}), which involves
the complexity of the latent space's geometry, specifically of its isometry
group $\isometries(\ContinuousSpace)$.  Having done this, we have shown that
the MLE also has close to the maximum expected log-likelihood.  We emphasize
this because the expected log-likelihood has a natural information-theoretic
interpretation in terms of divergence from the truth (Eq.\ \ref{eqn:crossent-is-ent-plus-kl} below).

\subsection{Notation}

Before we dive into details, we first introduce some additional notation for
our proofs.  We will use $G$ for both a (random or deterministic) graph and its
adjacency matrix.

We fix the latent space as $(\ContinuousSpace, \linkfnseq)$.  For brevity,
define
\begin{equation}
\lambda_n(x_p,x_q) \equiv \logit{\linkfn(\Metric{x_p}{x_q})} ~.
\end{equation}
As usual with binary observations, we can rewrite
(\ref{eqn:likelihood.function}) so that the sum is taken over all pairs of
distinct $(p,q)$ and then replace each summand by
$\log{(1-\linkfn(\Metric{x_p}{x_q}))}+G_{pq}\lambda_n(x_p,x_q)$.  This brings
out that the only data-dependent (and hence random) part of $\Loglike$ is
linear in the entries of the adjacency matrix, and in the logit transform of
the link-probability function.  As usual, when there is no ambiguity about the
graph $G$ providing the data, we will suppress that as an argument, writing
$\Loglike(x_{1:n})$.  We write the class of log-likelihood functions as
$\LikeFnClass$.  

\subsection{Information Theory}
\label{ref:information-theory}

Recall the definition of expected normalized log-likelihood from
Eq.\ \eqref{eqn:cross-entropy}:

Taking expectations with respect to the actual graph
distribution of a random graph $G$ having $n$ nodes, we define the expected
normalized log-likelihood (the {\em cross-entropy}; \citealt[ch.\
2]{Cover-and-Thomas-2nd}) by
\begin{equation}
  \label{eqn:cross-entropy}
	\CrossEntropy(x_{1:n})=\mathbb{E}_G[\Loglike(x_{1:n}; G)],
\end{equation}

where the expectation is taken with respect to the random graph $G$ (and not
the random graph $G$ conditioned on some random equantity $\mu$ making the
edges independent.)  For notational convenience, set
\begin{align*}
  \pi_{pq}(a)&=\Prob{\adjacencymatrix{}_{pq}=a \mid x_p,x_q}\\
	\truedyaddist_{pq}(a)&=\Prob{\adjacencymatrix{}_{pq}=a \mid x_p^*,x_q^*}
\end{align*}
(so that $\pi_{pq}(1)=w_n(x_p,x_q)$ and  $\pi^*_{pq}(1)=w_n(x^*_p,x^*_q)$.)  Then
\begin{equation*}
   \CrossEntropy(x_{1:n})
  =\frac{1}{n(n-1)} \sum_{1\leq p<q\leq n}{
	{\sum_{a \in \left\{0,1\right\}}{\truedyaddist_{pq}(a) \log{\pi_{pq}(a)}}}}.
\end{equation*}

In information theory \citep[ch.\ 2]{Cover-and-Thomas-2nd}, this quantity
is known as the (normalized) {\bf cross-entropy}, and we know that
\begin{equation*}
-\sum_{a \in \left\{0,1\right\}}{\truedyaddist_{pq}(a) \log{\pi_{pq}(a)}}=H[\truedyaddist_{pq}] + \KLDiv(\truedyaddist_{pq}\lVert\pi_{pq}),
\end{equation*}
as the left side is the cross-entropy of the distribution $\pi_{pq}$ with
respect to the distribution $\truedyaddist_{pq}$ and the right side is the sum
of ordinary entropy $H$ with the Kullback-Leibler divergence $D$.  Since both
entropy and KL divergence are additive over independent random variables
\citep[ch.\ 2]{Cover-and-Thomas-2nd} like $\adjacencymatrix{}_{pq}$, we
have\footnote{The decomposition of expected log-likelihood into a entropy term
  which only involves the true distribution of the data, plus a KL divergence,
  goes back to at least \citet{Kullback-info-theory-and-stats}.}, defining
$H[\truedyaddist]$ and $\KLDiv(\truedyaddist \lVert \pi)$ in the obvious ways,
\begin{equation}
  -\CrossEntropy(x_{1:n}) = H[\truedyaddist] + \KLDiv(\truedyaddist \lVert \pi)
  \label{eqn:crossent-is-ent-plus-kl}
\end{equation}

Unsurprisingly, $\CrossEntropy$ achieves a maximum at the (isometry class of)
the true coordinates\footnote{The statement and proof of the following lemma
presume that the model is well-specified.  If the model is mis-specified,
then $\inf_{x_{1:n}}{\KLDiv(\truedyaddist\lVert\pi)}$ is still well-defined, and
still defines the value of the supremum for $\CrossEntropy$.  The pseudo-true
parameter value would be one which actually attained the infimum of the
divergence \citep{White-specification-analysis}.  This, in turn, would be the
projection of $\truedyaddist$ on to the manifold of distributions generated
by the model \citep{Diff-geo-in-stat-inf}.  All later invocations of Lemma
\ref{lemma:crossent-minimized-on-truth} could be replaced by the assumption
merely that this pseudo-truth is well-defined.}.

\begin{lem}
  For $\infty$-homogeneous $\ContinuousSpace$ and
  $G\sim\graph_n(\trueparam)$,
  \begin{equation*}
    \IsometryClass{\trueparam}=\argmax_{x_{1:n} \in \ContinuousSpace^n}{\NormalizedCrossEntropy(x_{1:n})} ~.
  \end{equation*}
  \label{lemma:crossent-minimized-on-truth}
\end{lem}
\begin{proof}
  Letting $H$ and $D$ respectively denote entropy and KL divergence as in (\ref{eqn:crossent-is-ent-plus-kl}),
  $\KLDiv(\truedyaddist \lVert \pi) \geq 0$, with equality if and only if
  $\truedyaddist = \pi$.  Therefore we have that the divergence-minimizing
  $\pi$ must be the distribution over graphs generated by some $x_{1:n} \in
  \IsometryClass{\trueparam}$, and conversely that any parameter vector in that
  isometry class will minimize the divergence.  The lemma follows from
  (\ref{eqn:crossent-is-ent-plus-kl}).
\end{proof}

\subsection{Geometric Complexity of Continuous Spaces}
\label{sec:dimension-bound-details}

For various adjacency matrices $\adjacencymatrix^1, \adjacencymatrix^2$, etc.,
let us abbreviate $\Loglike(x_{1:n}; \adjacencymatrix^i)$ as
$\Loglike^i(x_{1:n})$ (following \citealt[p.\
91]{Anthony-Bartlett-neural-network-learning}).  Let us pick $r$ different
adjacency matrices $\adjacencymatrix^1, \ldots, \adjacencymatrix^r$, and set
$\psi(x_{1:n}) = \left(\Loglike^1(x_{1:n}), \ldots,
  \Loglike^r(x_{1:n})\right)$.  We will be concerned with the geometry of the
level sets of $\psi$, i.e., the sets defined by $\psi^{-1}(c)$ for $c \in
\Reals^r$.  We say that a function $\psi:\ContinuousSpace^n\ra\mathbb{R}^r$
has {\bf has fibers with uniform bound $B$ on the number of path-components} if
$\psi^{-1}(x)$ has at most $B$ path-components, equivalence classes of points
where two points are equivalent if there is a path in $\psi^{-1}(x)$ connecting
them, for each $x\in\mathbb{R}^r$.

\begin{prop}
  Suppose that all functions in $\LikeFnClass$ are jointly continuous in their
  $d$ parameters almost everywhere, and that $\LikeFnClass$ has fibers with
  uniform bound $B$ on the number of path-components.  Then the growth function
  of $\LikeFnClass$, i.e., the maximum number of ways that $m \geq d$ data
  points $\adjacencymatrix^1, \ldots \adjacencymatrix^m$ could be dichotomized
  by thresholded functions from $\LikeFnClass$, is at most
  \begin{equation}
  \Pi(m) \leq B {\left(\frac{em}{d}\right)}^{d} \label{eqn:bounded-connected-components-growth-function}
  \end{equation}
  Thus the pseudo-dimension of $\LikeFnClass$ is at most
  $2\log_2{B}+2d\log_2{2/\ln{2}}$.
\label{prop:bounded-connected-components-growth-function}
\end{prop}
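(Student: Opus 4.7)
My plan follows the standard VC-theoretic recipe for parametric function classes, essentially as developed in Chapter 8 of Anthony and Bartlett: first bound the growth function via a cell-counting argument in the parameter space, then extract the pseudo-dimension from the defining inequality $2^{d_P}\leqslant \Pi(d_P)$.

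I would begin by reformulating dichotomies geometrically. Fix a sample of $m$ adjacency matrices $\adjacencymatrix^1,\ldots,\adjacencymatrix^m$ together with thresholds $t_1,\ldots,t_m$, and define $\psi_i(x_{1:n}) = \Loglike^i(x_{1:n}) - t_i$. Each dichotomy corresponds to the sign pattern of $\psi=(\psi_1,\ldots,\psi_m)$ at some $x_{1:n}\in\ContinuousSpace^n$, and two parameters in the same path-component of the complement $\ContinuousSpace^n\setminus\bigcup_i\psi_i^{-1}(0)$ produce identical sign patterns. Thus $\Pi(m)$ is at most the number of such components, with a boundary correction handled by the almost-everywhere joint continuity hypothesis.

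Second, I would bound the number of components inductively in $m$. A Sauer–Shelah-style argument exploits the fact that the parameter space has dimension $d$: any cell in the arrangement is locally determined by a subfamily of at most $d$ of the level sets $\{\psi_i=0\}$, giving a combinatorial factor $\sum_{k=0}^d\binom{m}{k}\leqslant (em/d)^d$. The bounded-connected-components hypothesis controls the multiplicity of cells per combinatorial type: for each such subfamily, the joint zero set has at most $B$ connected components by assumption, so each combinatorial cell type contributes at most $B$ actual cells. Combining these estimates gives $\Pi(m)\leqslant B(em/d)^d$.

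Third, the pseudo-dimension bound is the standard consequence. By definition of $d_P$, the inequality $2^{d_P}\leqslant\Pi(d_P)\leqslant B(ed_P/d)^d$ holds, so $d_P\leqslant\log_2 B + d\log_2(ed_P/d)$. Resolving this implicit inequality by the standard lemma for expressions of the form $x\leqslant a+b\log_2 x$ (bound $\log_2 x$ by $x/(2\ln 2)$ above some threshold and isolate $x$) yields $d_P \leqslant 2\log_2 B + 2d\log_2(2/\ln 2)$.

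The main obstacle is the inductive cell count in step two. Classical VC arguments (Warren, Milnor–Thom) rely on polynomial or real-analytic structure to bound the topology of arrangements, whereas here only continuous functions with bounded-components level sets are available. The key move is to re-phrase the inductive estimate so that the bounded-components hypothesis is invoked exactly on the joint level set $\psi^{-1}(0)$ of a subfamily of size at most $d$ determining each cell; once that translation is correctly set up, the combinatorial bookkeeping with binomial coefficients and the final transcendental inequality are routine.
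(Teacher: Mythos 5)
Your third step---resolving $2^{d_P}\leqslant B(ed_P/d)^d$ via $\log_2 m \leqslant m/(2d)+\log_2(2d/(e\ln 2))$---is exactly the calculation the paper performs, and you land on the same constant $2\log_2 B+2d\log_2(2/\ln 2)$. The genuine divergence is in how the growth-function bound $\Pi(m)\leqslant B(em/d)^d$ is obtained. The paper does not re-derive it: it cites Theorem 7.6 of Anthony and Bartlett outright, adding only the observation that the theorem's $C^d$ hypothesis is used in their proof (\emph{op.\ cit.}, \S 7.4) solely to handle the multi-threshold case $k>1$, so in the single-threshold setting ($k=1$) of this proposition mere continuity suffices. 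You instead attempt a from-scratch Sauer--Shelah-style cell count, and this is where your sketch has real gaps. The assertion that ``any cell in the arrangement is locally determined by a subfamily of at most $d$ of the level sets'' is familiar from hyperplane arrangements (dimension counting), but it is not automatic for arrangements of level sets of arbitrary continuous functions; establishing it is precisely the content of the inductive argument in Anthony--Bartlett \S 7.4. Likewise, ``each combinatorial cell type contributes at most $B$ actual cells'' does not follow merely from the bounded-connected-components hypothesis: having at most $B$ components of the joint zero set of a subfamily does not by itself control the number of sign-cells that subfamily bounds. You correctly diagnose that the absence of polynomial or analytic structure is the obstacle, but your proposed remedy---``re-phrase the inductive estimate so that the bounded-components hypothesis is invoked exactly on the joint level set''---names the target rather than hitting it. Either carry out the induction of Anthony--Bartlett \S 7.4 in detail and check that each step survives on continuity alone (which is what the paper asserts in passing), or simply invoke their Theorem 7.6 as the paper does; as written, your step two is a plan, not a proof.
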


\begin{proof}
  The inequality \eqref{eqn:bounded-connected-components-growth-function} is a
  simplification of Theorem 7.6 of \citet[p.\
  91]{Anthony-Bartlett-neural-network-learning}, which allows for sets to be
  defined by $k$-term Boolean combinations of thresholded functions from
  $\LikeFnClass$.  (That is, the quoted bound is that of the theorem with
  $k=1$.)  Moreover, while Theorem 7.6 of
  \citet{Anthony-Bartlett-neural-network-learning} assumes that all functions
  in $\LikeFnClass$ are $C^d$, the proof ({\em op.\ cit.}, sec. 7.4) only
  requires continuity in the simplified setting $k=1$.

  For any class of sets with VC dimension $v < \infty$, the growth function is
  polynomial in $m$, $\Pi(m) \leq (em/v)^v$ \citep[Theorem 3.7, p.\
  40]{Anthony-Bartlett-neural-network-learning}, and, conversely, if
  $\Pi(m) < 2^m$ for any $m$, then the class of sets has VC dimension at most
  $m$.  Since Eq.\ \ref{eqn:bounded-connected-components-growth-function} shows
  that $\Pi(m)$ grows only polynomially in $m$, the VC dimension must be
  finite.  Comparing the $O((m/d)^d)$ rate of Eq.\
  \ref{eqn:bounded-connected-components-growth-function} to the $O((m/v)^v)$
  generic VC rate suggests $v=O(d)$, but it is desirable, for later purposes,
  to find a more exact result.

  To do so, we find the least $m$ where Eq.\
  \ref{eqn:bounded-connected-components-growth-function} is strictly below
  $2^m$, and take the logarithm:
  \begin{eqnarray}
    B {\left(\frac{em}{d}\right)}^{d} & < & 2^m\\
    \log_2{B} + d\log_2{\frac{e}{d}} + d\log_2{m} & < & m
  \end{eqnarray}
  Now, one can show that
  $\log_2{m} \leq \frac{m}{2d} + \log_2{\frac{2d}{e\ln{2}}}$ \citep[p.\
  91]{Anthony-Bartlett-neural-network-learning}, so that
  \begin{equation}
    \log_2{B} + d\log_2{\frac{e}{d}} + d\log_2{m} \leq \log_2{B} + d\log_2{\frac{e}{d}} + \frac{m}{2} + d\log_2{\frac{2d}{e\ln{2}}}
  \end{equation}
  and it will be sufficient for the right-hand side to be $< m$.  This in turn
  is implied by
  \begin{equation}
    2\log_2{B} + 2d\log_2{\frac{2}{\ln{2}}} <  m
  \end{equation}
  so this is an upper bound on the VC dimension of the subgraphs of
  $\LikeFnClass$, and so on the pseudo-dimension of $\LikeFnClass$.
\end{proof}

Next we bound the complexity of log-likelihoods for certain latent spaces.

\begin{thm}
  If $(\ContinuousSpace, \linkfnseq)$ is \regular{}, the pseudo-dimension of $\LikeFnClass$ is at most
  \begin{equation}
    2\log_2{B_{\LatentSpace}} + 2n\dim{\LatentSpace}\log_2{2/\ln{2}}, \label{eqn:pseudo-dimension-of-likelihood-functions}
  \end{equation}
  where $B_{\LatentSpace}$ is the number of path-components of the space of isometries on $M$.  
  $\isometries(\ContinuousSpace)$.
  \label{thm:pseudo-dimension-of-likelihood-functions}
\end{thm}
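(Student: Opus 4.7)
The plan is to verify the two hypotheses of Proposition~\ref{prop:bounded-connected-components-growth-function} applied to $\LikeFnClass$ with parameter dimension $d = n\dim\LatentSpace$ and connected-component bound $B = B_{\LatentSpace}$, and then invoke the proposition. The parameter space is $\LatentSpace^n$, which has dimension $n\dim\LatentSpace$ since $\ContinuousSpace$ is a Riemannian manifold of dimension $\dim\LatentSpace$ by \regular{}ity. For joint continuity almost everywhere, $\Loglike(x_{1:n};G)$ is a finite sum of terms each depending on $x_{1:n}$ only through the pairwise distances $\MetricSymbol(x_p,x_q)$; since $\MetricSymbol$ is continuous on $\ContinuousSpace$ and $\linkfn$ is smooth by \regular{}ity, each summand and hence $\Loglike$ is continuous in $x_{1:n}$.

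For the bounded-components hypothesis, fix any $r$ adjacency matrices $G^1,\ldots,G^r$ together with the associated map $\psi\colon \LatentSpace^n\to\Reals^r$ whose $i$th coordinate is $\Loglike(\,\cdot\,;G^i)$. The crucial structural fact is that $\psi$ factors through pairwise distances: the map $D\colon \LatentSpace^n\to\Reals^{n(n-1)/2}$ sending $x_{1:n}$ to the tuple $(\MetricSymbol(x_p,x_q))_{p<q}$ admits a factorization $\psi = \tilde\psi\circ D$ for some continuous $\tilde\psi$. By condition~(1) of rigidity (Definition~\ref{defn:rigidity}), each fiber of $D$ is a single orbit of the diagonal action of $\isometries(\ContinuousSpace)$ on $\LatentSpace^n$, so it is the continuous image of $\isometries(\ContinuousSpace)$ under $\phi\mapsto(\phi(x_1),\ldots,\phi(x_n))$; by condition~(2), this image has at most $B_{\LatentSpace}$ path-connected components.

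It remains to show that these orbit components are the only source of disconnection in $\psi^{-1}(c)=D^{-1}(\tilde\psi^{-1}(c))$. Here I would exploit the explicit form of $\tilde\psi$: each coordinate can be written as $\tilde\psi_i(d)=\sum_{p<q}2^{-p-q}g^i_{pq}(d_{pq})$ where each $g^i_{pq}$ is (depending on $G^i_{pq}$) a scalar multiple of $\log(1-\linkfn(d))$ or $\log\linkfn(d)$, hence a strictly monotone smooth function of the single variable $d_{pq}$ (using that $\linkfn$ is smooth, injective, and non-increasing by \regular{}ity). This separable-monotone structure, combined with the path-connectedness of $D(\LatentSpace^n)$, forces $\tilde\psi^{-1}(c)$ to be path-connected inside $D(\LatentSpace^n)$, so that the orbit-fibered map $D$ restricted to $\psi^{-1}(c)$ sits over a connected base and its total-space components coincide with fiber components, giving $CC(\psi^{-1}(c))\leq B_{\LatentSpace}$.

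The main obstacle is precisely this last connectivity claim: proving that $\tilde\psi^{-1}(c)$ is connected in the image $D(\LatentSpace^n)$ requires careful handling of the geometric constraints on admissible distance tuples (triangle inequalities and the higher-order metric constraints specific to $\ContinuousSpace$), and one must further check that the orbit-fiber bundle over this connected base does not acquire extra components from nontrivial monodromy of the $\isometries(\ContinuousSpace)$-action. Once this step is in hand, Proposition~\ref{prop:bounded-connected-components-growth-function} applies immediately and yields pseudo-dimension at most $2\log_2 B_{\LatentSpace}+2n\dim\ContinuousSpace\log_2(2/\ln 2)$, as claimed.
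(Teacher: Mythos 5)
Your overall plan is the same as the paper's: verify the hypotheses of Proposition~\ref{prop:bounded-connected-components-growth-function} with $d=n\dim\LatentSpace$ and $B=B_{\LatentSpace}$, using rigidity to bound components of level sets by components of isometry orbits. The joint-continuity and dimension-counting steps are fine, and you are right to consider the joint map $\psi=(\Loglike^1,\ldots,\Loglike^r)$ rather than a single $\Loglike(\cdot;G)$ --- the paper's write-up actually argues with a single fixed $G$, glossing over the $r$-tuple, so your framing is the more careful one.

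Where you diverge from the paper is in how the ``base connectivity'' step is handled. You treat $\tilde\psi$ as an opaque separable-monotone function of the distance tuple and then stall on whether $\tilde\psi^{-1}(c)\cap D(\LatentSpace^n)$ is connected. The paper instead exploits linearity directly: writing the normalized log-likelihood as $\NormalizedLoglike(\cdot;G)=T\circ\phi(\cdot;G)$, where $\phi$ records the per-dyad terms (each a strictly monotone function of the corresponding distance, so $\phi$ has the same fibers as your $D$, namely isometry orbits) and $T$ is the fixed linear functional $y\mapsto\sum_{p\leq q}2^{-p-q}y_{pq}$. Then $T^{-1}(c)$ is an affine subspace of $\Reals^{n^2}$ --- automatically convex and connected --- and the paper asserts a local-convexity property of $\phi(\LatentSpace^n)$ (products of normal convex neighborhoods have convex $\phi$-image) to get path-lifting, concluding $CC(\NormalizedLoglike(\cdot;G)^{-1}(c))=CC(\phi(\cdot;G)^{-1}(T^{-1}(c)))\leq CC(\phi(\cdot;G)^{-1}(d))\leq B_{\LatentSpace}$. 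So the idea you should take away is: don't reason about the nonlinear $\tilde\psi$ at all; push the nonlinearity into the ``feature'' map whose fibers are orbits, and keep the outer map linear so its level sets are trivially connected. (For the joint $\psi$ over $r$ matrices you can use the stacked per-dyad map $\Phi(x_{1:n})=\bigl(\log(1-\linkfn(d_{pq})),\,\lambda_n(d_{pq})\bigr)_{p\leq q}$, which is still fiberwise an isometry orbit, with a linear outer map in place of $T$.)

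Two further remarks. First, your worry about monodromy is not actually a problem for an upper bound: if $\phi$ restricted over a path-connected base has the path-lifting property, then $CC$ of the total space is at most $CC$ of a single fiber; monodromy can only merge components, never split them. Second, your worry about connectivity of the base is real and, frankly, the paper's treatment of it is thin: convexity of $T^{-1}(c)$ in $\Reals^{n^2}$ does not by itself imply that $T^{-1}(c)\cap\phi(\LatentSpace^n)$ is path-connected (the image $\phi(\LatentSpace^n)$ need not be convex), and the paper invokes local convexity of the image plus path-lifting without spelling out why every point of $\phi^{-1}(T^{-1}(c))$ can actually be joined to $\phi^{-1}(d)$. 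So you have located the genuinely delicate step; what you are missing is the linear-factorization trick that reduces it to a more tractable (if still not airtight) geometric claim.
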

\begin{proof}
  By the fact that $(\ContinuousSpace, \linkfnseq)$ is \smooth{}, $\LikeFnClass$ is $C^{\infty}$ in all
  its $n\dim{\LatentSpace}$ continuous parameters, so in applying Proposition
  \ref{prop:bounded-connected-components-growth-function}, we may set
  $d=n\dim{\LatentSpace}$.  
  Define $\phi(x_{1:n};G)$ to be the function $M^n\ra\mathbb{R}^{n^2}$ sending a tuple $x_{1:n}$ to the vector whose $(pq)$th coordinate, for $1\leq p,q\leq n$, is $\MetricSymbol(x_{p},x_{q})$.  
  Define $T:\mathbb{R}^{n^2}\ra\mathbb{R}^n$ by the rule 
  $$T(y_1,y_2,\ldots,y_{n^2})=\frac{1}{n(n-1)}\sum_{1\leq p\leq q\leq n}y_{pq}.$$

  Note each $\NormalizedLoglike(-;G)\in\LikeFnClass$ satisfies $\NormalizedLoglike=T\phi(-;G)$.
  The preimage $T^{-1}(c)$ of a point under $T$, a linear transformation, is either empty or a (connected and convex) linear subspace of $\mathbb{R}^{n^2}$.  
  The function $\phi(-;G)$ has bounded connected components with bound $B_{\LatentSpace}$ because $\phi(x_{1:n})=\phi(y_{1:n})$ if and only if $\IsometryClass{x_{1:n}}=\IsometryClass{y_{1:n}}$ by $\infty$-homogeneity.
  Each $x_{1:n}\in M^n$ has a neighborhood $U$ (e.g. a product of normal convex neighborhoods of $x_1,x_2,\ldots,x_n$ in $M$) such that $\phi(U;G)$ is convex in $\mathbb{R}^{n^2}$.   
  It is then straightforward to show that every path in $\phi(M^n;G)$ starting from a point $\phi(x_{1:n};G)$ lifts under $\phi(-;G)$ to a path in $M^n$ from $x_{1:n}$.  
  Thus $CC(\phi(-;G)^{-1}(d))=CC((\phi(-;G)^{-1}(T^{-1}(c)))$ for each $d\in T^{-1}(c)$.  
  Thus 
  \begin{eqnarray*}
    CC(\phi(-;G)^{-1}(T^{-1}(c)))
    &\leq& CC(\phi(-;G)^{-1}(d))\quad d\in T^{-1}(c)\\
    &\leq& B_{\LatentSpace}.
  \end{eqnarray*}
  where $CC(X)$ denotes the number of path-components of a space $X$.
  Thus each $\Loglike(-;G)\in\LikeFnClass$ has bounded connected components with bound $B_{\LatentSpace}$.
  The hypotheses of Proposition \ref{prop:bounded-connected-components-growth-function} being
  satisfied, \eqref{eqn:pseudo-dimension-of-likelihood-functions} follows from
  Proposition \ref{prop:bounded-connected-components-growth-function}.
\end{proof}

\subsection{Pointwise Convergence of Log-Likelihoods}

\begin{lem}
  Suppose that all of the edges in $G$ are conditionally independent given some
  random variable $\mu$. Then for any $\epsilon>0$,
  \begin{equation}
    \label{eqn:first-concentration-bound}
    \Prob{|\Loglike(x_{1:n}) - \CrossEntropy(x_{1:n})| > \epsilon}\leq 
		2e^{{\left(-2\frac{n^2(n-1)^2\epsilon^2}{\sum_{p=1}^{n}{\sum_{q > p}{2\lambda^2_n(x_p, x_q)}}}\right)}}
  \end{equation}
  In particular, this holds when $G\sim\graph_n(\trueparam)$ or $G \sim \graph_n(f)$.
  \label{lemma:individual-concentration-at-individual-rate}
\end{lem}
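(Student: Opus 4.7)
The plan is to recognise that $\NormalizedLoglike(x_{1:n};G) - \NormalizedCrossEntropy(x_{1:n})$ is a weighted sum of centred, conditionally independent, bounded random variables, and then apply Hoeffding's inequality for independent bounded summands.

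The first step is to unpack the deviation. The $(p,q)$ summand of $\NormalizedLoglike(x_{1:n};G)$ is $2^{-p-q}[\log(1-\linkfn(\MetricSymbol(x_p,x_q))) + G_{pq}\lambda_n(x_p,x_q)]$; only the second piece depends on $G$, and the first cancels against the corresponding piece of $\NormalizedCrossEntropy$, yielding
\begin{equation*}
\NormalizedLoglike(x_{1:n};G) - \NormalizedCrossEntropy(x_{1:n}) = \sum_{1 \le p < q \le n} 2^{-p-q}\lambda_n(x_p,x_q)\bigl(G_{pq} - \mathbb{E}[G_{pq}]\bigr).
\end{equation*}
Conditioning on $\mu$, the hypothesis makes the edges $G_{pq}$ independent, each taking values in $\{0,1\}$. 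Thus the summand $Y_{pq} := 2^{-p-q}\lambda_n(x_p,x_q)\bigl(G_{pq} - \mathbb{E}[G_{pq}\mid\mu]\bigr)$ is conditionally centred with range at most $2^{-p-q}|\lambda_n(x_p,x_q)|$, and the $Y_{pq}$ are conditionally independent given $\mu$.

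Next I would apply Hoeffding's inequality to $\sum_{p<q} Y_{pq}$ conditionally on $\mu$, which yields
\begin{equation*}
\Prob{\bigl|\textstyle\sum_{p<q} Y_{pq}\bigr| > \epsilon \,\bigm|\, \mu} \le 2\exp\!\left(-\frac{\epsilon^2}{\sum_{p<q}4^{-p-q}\lambda_n^2(x_p,x_q)}\right).
\end{equation*}
Because the right-hand side does not involve $\mu$, taking an outer expectation preserves the bound and gives \eqref{eqn:first-concentration-bound}. The two particular cases are then immediate: when $G \sim \graph_n(\trueparam)$, $\mu=\trueparam$ is deterministic and the conditioning is vacuous; when $G \sim \graph_n(f)$, $\mu = X_{1:n}$ with the $X_i$ i.i.d.\ from $f$, and edge-independence conditional on $X_{1:n}$ is built into the CLS model.

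I do not anticipate a substantive obstacle; the argument reduces to a direct plug-in to Hoeffding, with the stated bound perhaps a factor-of-two looser in the exponent than the sharpest version of the inequality (which only affects constants downstream). The single point of care is the distinction between $\mathbb{E}[G_{pq}\mid\mu]$, the natural centring that emerges after conditioning, and the unconditional $\mathbb{E}[G_{pq}]$ appearing in $\NormalizedCrossEntropy$. These agree when $\mu$ is deterministic, and in the i.i.d.\ case they agree provided $\NormalizedCrossEntropy$ is read as the conditional-on-$X_{1:n}$ cross-entropy, consistent with how this concentration bound is used in the downstream uniform-convergence lemmas.
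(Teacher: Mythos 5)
Your argument is correct and is essentially the paper's argument: the paper conditions on $\mu$, invokes McDiarmid's bounded-difference inequality on $\NormalizedLoglike$ viewed as a function of the independent $G_{pq}$'s, and then integrates out $\mu$ since the bound is $\mu$-free; your version decomposes $\NormalizedLoglike - \NormalizedCrossEntropy$ into the linear-in-$G_{pq}$ weighted sum and applies Hoeffding directly, which for a linear function is the same inequality. Your closing remark about the distinction between $\mathbb{E}[G_{pq}\mid\mu]$ and the unconditional $\mathbb{E}[G_{pq}]$ is well taken and resolved correctly: the paper's own $\NormalizedCrossEntropy$ is written in terms of $\truedyaddist_{pq}$, which is already conditioned on the coordinates, so the McDiarmid/Hoeffding centering and the cross-entropy centering do agree in both special cases.
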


\begin{proof}
  Changing a single $\adjacencymatrix_{pq}$, but leaving the rest the same,
  changes $\Loglike(x_{1:n}; \adjacencymatrix)$ by
	$\frac{1}{n(n-1)}\lambda_n(x_p,x_q)$.  The $\adjacencymatrix_{pq}$, for $p<q$, are all independent given $\mu$.  We may
  thus appeal to the bounded difference (McDiarmid) inequality \citep[Theorem
  6.2, p.\ 171]{Boucheron-Lugosi-Massart-concentration}: if $f$ is a
  function of independent random variables, and changing the $k^{\mathrm{th}}$
  variable changes $\Loglike$ by at most $c_k$, then
  \begin{equation}
    \Prob{|f-\Expect{f}|>\epsilon}\leq 2e^{\left(-\frac{\epsilon^2}{2\nu}\right)}
  \end{equation}
  where $\nu = \frac{1}{4}\sum{c_k^2}$.  In the present case, $c_{pq} =
  \lambda_n(x_p,x_q)$.  Thus,
  \begin{equation}
	  \nu = \frac{1}{4}\sum_{p=1}^{n}{\sum_{q > p}{n^{-2}(n-1)^{-2}\lambda^2_n(x_p,x_q)}}=\frac{1}{4n^2(n-1)^2}\sum_{p=1}^{n}\sum_{q > p}{\lambda^2_n(x_p, x_q)}
  \end{equation}
  and so $\Prob{|\Loglike(x_{1:n}) - \CrossEntropy(x_{1:n})| > \epsilon \mid \mu}$ is bounded from above by
  \begin{equation}
		2e^{\left(-\frac{2n^2(n-1)^2\epsilon^2}{\sum_{p=1}^{n}{\sum_{q>p}{\lambda^2_n(x_p,x_q)}}}\right)}
  \end{equation}
  Since the unconditional deviation probability 
  $$\Prob{|\Loglike(x_{1:n}) -
    \CrossEntropy(x_{1:n})| > \epsilon}$$
    is just the expected value of the
  conditional probability, which has the same upper bound regardless of $\mu$,
  the result follows (cf.\ \citealt[Theorem 2]{CRS-Leo-predictive-mixtures}).

  Finally, note that all edges in $\graph_n(\trueparam)$ are unconditionally independent, while those in
  $\graph_n(f)$ are conditionally independent given $X_{1:n}$, which plays the role of $\mu$.
\end{proof}

This lemma appears to give exponential concentration at an $O(n^4)$ rate, but
of course the denominator of the rate itself contains ${n \choose 2} = O(n^2)$
terms, so the over-all rate is only $O(n^2)$.  Of course, there must be some
control over the elements in the denominator.

\begin{lem}
  If $-\logitbound\leq\lambda_n(x_p,x_q)\leq\logitbound$, then for any $x_{1:n}$ and $\epsilon>0$,
  \begin{equation}
		\Prob{|\Loglike(x_{1:n}) - \CrossEntropy(x_{1:n})| > \epsilon}\leq 2 e^{\left(-2\frac{n(n-1)\epsilon^2}{\logitbound^2}\right)} \label{eqn:first-concentration-bound-at-uniform-rate}
  \end{equation}
  \label{lem:individual-concentration-at-uniform-rate}
\end{lem}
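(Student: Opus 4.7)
The plan is to specialize Lemma \ref{lemma:individual-concentration-at-individual-rate} using the logit-boundedness of $\linkfnseq$ in order to make the concentration rate uniform in $x_{1:n}$. That earlier lemma already furnishes an exponential concentration bound for $|\NormalizedLoglike(x_{1:n}) - \NormalizedCrossEntropy(x_{1:n})|$, but its rate depends on the particular choice of $x_{1:n}$ through the denominator $\sum_{p<q} 4^{-p-q}\lambda_n^2(x_p, x_q)$. All that is needed, then, is an $x_{1:n}$-free upper bound on this denominator.

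By Definition \ref{defn:logitbounded}, $|\lambda_n(x_p, x_q)| \leq \logitbound$ for every $x_p, x_q \in \ContinuousSpace$, so $\lambda_n^2(x_p,x_q) \leq \logitbound^2$ uniformly. Combining this with the crude estimate $4^{-p-q} \leq 1$ (valid for $p, q \geq 1$) and counting the $\binom{n}{2} = n(n-1)/2$ pairs $(p,q)$ with $1 \leq p < q \leq n$ yields
\begin{equation*}
\sum_{p=1}^{n}\sum_{q > p} 4^{-p-q}\lambda_n^2(x_p, x_q) \;\leq\; \frac{n(n-1)}{2}\logitbound^2.
\end{equation*}
Substituting this bound into the right-hand side of \eqref{eqn:first-concentration-bound} immediately yields \eqref{eqn:first-concentration-bound-at-uniform-rate}.

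There is no real obstacle: once Lemma \ref{lemma:individual-concentration-at-individual-rate} is available, the argument is purely algebraic. The conceptual content worth highlighting is that logit-boundedness is precisely the assumption that converts a data-dependent rate into a uniform one. One could plainly do better — the geometric decay of the $2^{-p-q}$ weights gives $\sum_{p<q} 4^{-p-q} \leq \textrm{const}$, yielding an $x_{1:n}$-free rate that does not deteriorate with $n$ — but the weaker $O(n^{-2})$ form stated here is notationally cleaner and entirely sufficient for the downstream use of this lemma. That downstream use is to serve as the pointwise ingredient which, when combined with the pseudo-dimension bound of Theorem \ref{thm:pseudo-dimension-of-likelihood-functions}, promotes pointwise concentration of $\NormalizedLoglike$ into uniform concentration over the full parameter space $\ContinuousSpace^n$.
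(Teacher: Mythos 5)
Your proof is correct and follows the same route as the paper's (one-line) proof: bound $\lambda_n^2 \leq \logitbound^2$, discard the $4^{-p-q}\leq 1$ factors, count $\binom{n}{2}$ pairs, and substitute into Lemma~\ref{lemma:individual-concentration-at-individual-rate}. You are in fact a bit more careful than the paper, whose proof quietly drops the $4^{-p-q}$ weights without comment, and your aside about the available $n$-free sharpening via $\sum_{p<q}4^{-p-q}=O(1)$ is accurate though, as you say, not needed downstream.
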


\begin{proof}
  By assumption, $\lambda^2_n(x_p,x_q) \leq \logitbound^2$. Thus
  $\sum_{p=1}^{n}{\sum_{q > p}{\lambda^2_n(x_p,x_q)}} \leq {n \choose
    2}\logitbound^2$, and the result follows from Lemma
  \ref{lemma:individual-concentration-at-individual-rate}.
\end{proof}

\subsection{Uniform Convergence of Log-Likelihoods}

Lemmas \ref{lemma:individual-concentration-at-individual-rate} and
\ref{lem:individual-concentration-at-uniform-rate} show that, with high
probability, $\Loglike(x_{1:n})$ is close to its expectation value
$\CrossEntropy(x_{1:n})$ for any given parameter vector $x_{1:n}$.  However, we
need to show that the MLE $\randomestimatedcoordinates$ has an {\em expected}
log-likelihood close to the optimal value.  We shall do this by showing that,
{\em uniformly} over $\LatentSpace^n$, $\Loglike(x_{1:n})$ is close to
$\CrossEntropy(x_{1:n})$ with high probability.  That is, we will show that
\begin{equation}
\sup_{x_{1:n}}{\left|\Loglike(x_{1:n}) - \CrossEntropy(x_{1:n})\right|} \convprob 0
\label{eqn:uniform-convergence-of-fluctuations}
\end{equation}
This is a stronger conclusion than even that of Lemma
\ref{lem:individual-concentration-at-uniform-rate}: since $\LatentSpace$ is a
continuous space, even if each parameter vector has a likelihood which is
exponentially close to its expected value, there are an uncountable infinity of
parameter vectors.  Thus, for all we know right now, an uncountable infinity of
them might be simultaneously showing large deviations, and continue to do so no
matter how much data we have.  We will thus need to show that likelihood at
different parameter values are {\em not} allowed to fluctuate independently, but
rather are mutually constraining, and so eventually force uniform convergence.

If there were only a finite number of allowed parameter vectors, we could
combine Lemma \ref{lem:individual-concentration-at-uniform-rate} with a union
bound to deduce \eqref{eqn:uniform-convergence-of-fluctuations}.  With an
infinite space, we need to bound the covering number of $\LikeFnClass$.  To
recall\footnote{See, e.g., \citet{Anthony-Bartlett-neural-network-learning} or
  \citet{Vidyasagar-on-learning-and-generalization}.}, the $L_1$ covering
number of a class $F$ of functions at scale $\epsilon$ and $m$ points,
$\mathcal{N}_1(\epsilon, F, m)$, is the cardinality of the smallest set of
functions $f_j \in F$ which will guarantee that, for any choice of points
$a_1, \ldots a_m$, $\sup_{_1,\ldots,a_m}\frac{1}{m}\sum_{i=1}^{m}{|f(a_i)f_j(a_i)|} \leq \epsilon$ for some
$f_j$ (this definition can be straightforwardly shown to be equivalent to that of \citet{Anthony-Bartlett-neural-network-learning}).  Typically, as in \citet[Theorem 17.1, p.\
241]{Anthony-Bartlett-neural-network-learning}, a uniform concentration
inequality takes the form of
\begin{equation}
\Prob{\sup_{f \in F}{\left|f - \Expect{f}\right|} \geq \epsilon} \leq c_0 c_1 \mathcal{N}_1(\epsilon c_2, F, c_3 m) e^{\left( - c_4 \epsilon^2 r(m)\right)}
\end{equation}
where the individual deviation inequality is
\begin{equation}
	\Prob{|f - \Expect{f}| \geq \epsilon} \leq c_0 e^{-\left( - \epsilon^2 r(m) \right)}.
\end{equation}

In turn, \citet[Theorem 18.4, p.\
251]{Anthony-Bartlett-neural-network-learning} shows that the $L_1$ covering
number $\mathcal{N}_1(\epsilon, F, m)$ of a class $F$ of functions with finite pseudo-dimension $v$ at scale
$\epsilon$ and $m$ observations is bounded:
\begin{equation}
	\mathcal{N}_1(\epsilon, F, m) \leq e(v+1){\left(\frac{2e}{\epsilon}\right)}^v.
\end{equation}
In our setting, we have $m=1$.  (That is, we observe {\em one} high-dimensional
sample; notice that the bound is independent of $m$ so this hardly matters.)

It thus remains to bound the pseudo-dimension of $\LikeFnClass$.  This involves
a rather technical geometric argument, ultimately revolving on the group
structure of the isometries of $\MetricSpace$. This may be summed up in the
existence of a constant $B_{\ContinuousSpace}$, which is 2 for any Euclidean
space, and (as it happens) also 2 for $\HH$.  This matter was handled in \S
\ref{sec:dimension-bound-details}.

\subsection{Proof of Theorem \ref{thm:uniform-concentration-of-likelihood}}

By assumption, there exists a sequence $\nu_1,\nu_2,\ldots$ of non-negative reals such that $|\lambda_n(x_p,x_q)|\leq\logitbound$ for each $n$ and $p,q$ with $\nu_n\in o(\sqrt{n})$.  

Presume for the moment that we know the $L_1$ covering number of
$\LikeFnClass$ is at most $\mathcal{N}_1(\LikeFnClass,\epsilon,1)$.  Then
  \begin{equation}
		\Prob{\sup_{x_{1:n}}{|\Loglike(x_{1:n})-\CrossEntropy(x_{1:n})|} \geq \epsilon} \leq 4\mathcal{N}_1(\LikeFnClass, \epsilon/16,2)e^{\left(-\frac{\epsilon^2n(n-1)}{8\logitbound^2}\right)}
    \label{eqn:concentration-bound-in-terms-of-covering-number}
  \end{equation}
The proof is entirely parallel to that of Theorem 17.1 in \citet[p.\
 241]{Anthony-Bartlett-neural-network-learning}, except for using Lemma
\ref{lem:individual-concentration-at-uniform-rate} in place of Hoeffding's
inequality, and so omitted.

Now, by Proposition \ref{prop:reasonable.symmetries} $B_M=2$ and therefore by
Theorem \ref{thm:pseudo-dimension-of-likelihood-functions}, the
pseudo-dimension of $\LikeFnClass$ is at most $2\log_2{B_{\LatentSpace}} +
2n\dim{\LatentSpace}\log_2{2/\ln{2}}$.  The $L_1$ covering number of
$\LikeFnClass$ is thus exponentially bounded in $O(n\log{1/\epsilon})$,
specifically \citep[Theorem 18.4, p.\
251]{Anthony-Bartlett-neural-network-learning}:
$\mathcal{N}_1(\LikeFnClass,\epsilon,2)$ is bounded above by
  \begin{equation}
    e(1 + 2\log_2{B_{\LatentSpace}} + 2n\dim{\LatentSpace}\log_2{2/\ln{2}}){\left(\frac{2e}{\epsilon}\right)}^{2\log_2{B_{\LatentSpace}} + 2n\dim{\LatentSpace}\log_2{2/\ln{2}}} 
    \label{eqn:covering-number-of-rigid-CLS}
\end{equation}

\eqref{eqn:covering-number-of-rigid-CLS} grows exponentially in
$O(n\log{1/\epsilon})$, while the rightmost factor in the upper bound of
\eqref{eqn:concentration-bound-in-terms-of-covering-number} shrinks
exponentially in $O(\epsilon^2n^2/\logitbound^2)$ and hence $O(n\epsilon^2)$ by
our regularity assumption.  For fixed $\epsilon$, then, the uniform deviation
probability over all of $\LikeFnClass$ in
\eqref{eqn:concentration-bound-in-terms-of-covering-number} is therefore
exponentially small, hence we have convergence in probability to zero.  $\Box$

{\em Remark 1:} In applying the theorems from
\citet{Anthony-Bartlett-neural-network-learning}, remember that we have only
one sample ($m=1$), which is however of growing ($O(n^2)$) dimensions, with a
more-slowly growing ($O(n)$) number of parameters.

{\em Remark 2:} From the proof of the theorem, we see that if $\logitbound^2$ grows slowly enough, the sum of the deviation
probabilities tends to a finite limit.  Convergence in probability would then
be converted to almost-sure convergence by means of the Borel-Cantelli lemma,
{\em if} the graphs at different $n$ can all be placed into a common
probability space.  Doing so however raises some subtle issues we prefer not to
address here (cf.\ \citep{your-favorite-ergm-sucks}).

\subsection{Proof of Corollary \ref{cor:uniform-concentration-of-likelihood}}

We adapt a very standard pattern of argument used to prove oracle inequalities
in learning theory.  This begins with Lemma
\ref{lemma:crossent-minimized-on-truth}, that $\CrossEntropy(\trueparam) \geq
\CrossEntropy(\MLE)$.  This implies that
$|\CrossEntropy(\MLE) - \CrossEntropy(\trueparam)| =
\CrossEntropy(\trueparam) - \CrossEntropy(\MLE)$.  Now add
and subtract log-likelihoods:
\begin{eqnarray}
0 \leq \CrossEntropy(\trueparam) - \CrossEntropy(\MLE) & = & \CrossEntropy(\trueparam) -\Loglike(\MLE) + \Loglike(\MLE) - \CrossEntropy(\MLE)\\
& \leq & \CrossEntropy(\trueparam) -\Loglike(\trueparam) + \Loglike(\MLE) - \CrossEntropy(\MLE) \label{eqn:mle-has-best-loglike} \\
& \leq & |\CrossEntropy(\trueparam) -\Loglike(\trueparam)| + |\Loglike(\MLE) - \CrossEntropy(\MLE)|\\
& \leq & 2 \sup_{x_{1:n}}{|\Loglike(x_{1:n})-\CrossEntropy(x_{1:n})|} \convprob 0
\end{eqnarray}
where in Eq.\ \ref{eqn:mle-has-best-loglike} we use the trivial fact that since
$\MLE$ maximizes the likelihood, $\Loglike(\trueparam) \leq
\Loglike(\MLE)$, and the last line invokes Theorem
\ref{thm:uniform-concentration-of-likelihood}.  $\Box$

\section{Conclusion}
We have formulated and proven a notion of convergence for non-parametric likelihood estimators of graphs generated from continuous latent space models, under some mild assumptions on the generative models.
Traditional convergence results for statistical estimators are a kind of ergodicity, or long-term mixing, for multiple, independent samples. 
The size of a single sample network here plays the role of the number of samples in traditional formulations of consistency.
These main results hold even when our generative models are mis-specified, i.e. when we fix a latent space but the generating graph distributions are not defined in terms of the space, under some additional assumptions [Appendix \ref{appendix:mis-specified}].
Continuous latent space models turn out to provide the necessary ergodicity through conditional independence.
A consequent notion of consistency, which we save for future work, requires some formalization of what we mean by convergence of estimates, i.e. sequences of coordinates, of varying sizes.  
And a proof of such a consistency result will likely require some adaptation of standard technical tools for concluding convergence of extremal estimators from convergence in random objective functions (eg. \cite{van-der-Vaart-asymptotic-stats}.)

\appendix
\section{Mis-specified models}
\label{appendix:mis-specified}
Our consistency results extend from specified to certain mis-specified models.
We still assume the existence of a latent space $(\ContinuousSpace,\linkfnseq)$
as before, but assume that sample graphs are sampled not by a distribution of
the form $\graph_n(x_{1:n})$ but in fact by some arbitrary distribution of
graphs having $n$ nodes.
The only assumption we make about such random graphs $G$ in this section, as before, is that there exists some random variable $\mu$ such that the edges of $G$ are conditionally inependent given $\mu$.
For the case where $G$ is drawn from a CLS model, $\mu$ can be taken to be the random latent coordinates of the nodes of $G$.
We call a sequence $G_1,G_2,\ldots$ of random graphs
{\em almost-specified} if there exists $x^*_{1:\infty}\in\ContinuousSpace^\infty$
such that, for all sufficiently large $n$, $\CrossEntropy(x_{1:n})$ achieves a
maximum uniquely exactly for $x_{1:n}\in\IsometryClass{\trueparam}$.  For such
an almost-specified model, $x^*_{1:\infty}$ plays the role of the true coordinates
and the assumption of being almost specified plays the role of Lemma
\ref{lemma:crossent-minimized-on-truth} (e.g. in all proofs); we call such
$x^*_{1:\infty}$ the {\em pseudo-coordinates} of the almost-specified model.
Consequently, we can restate our main results at the following level of
generality.

\begin{thm}
  For an almost specified model with pseudo-coordinates $x^*_{1:*}$ and a compact, \regular{} latent space $(\ContinuousSpace, \linkfnseq)$,
  \begin{equation*}
    \sup_{x_{1:n}}{|\Loglike(x_{1:n})-\CrossEntropy(x_{1:n})|} \convprob 0
  \end{equation*}
\end{thm}


\section*{Acknowledgements}

Our work was supported by NSF grant DMS-1418124; DA also received support from
NSF Graduate Research Fellowship under grant DGE-1252522, and CRS from NSF
grant DMS-1207759.  We are grateful for valuable discussions with Carl
Bergstrom, Elizabeth Casman, David Choi, Aaron Clauset, Steve Fienberg,
Christopher Genovese, Aryeh Kontorovich, Dmitri Krioukov, Cris Moore,
Alessandro Rinaldo, Mitch Small, Neil Spencer, Andrew Thomas, Larry Wasserman,
and Chris Wiggins, and for feedback from seminar audiences at CMU, UCLA,
UW-Seattle, and SFI.

\bibliography{locusts}
\bibliographystyle{imsart-nameyear}

\end{document}